\title{Aspects of stable polynomials}
\author{Steve Fisk}
\date{\today}
\newtheorem{fact}{Fact}
\newtheorem{question}{Question}
\theoremstyle{definition}
\newtheorem{definition}{Definition}
\newtheorem{remark}{Remark}
\newtheorem{example}{Example}
\newenvironment{aside}{}{}
\newcommand{\notimplies}{\hspace*{.3cm}\not\hspace*{-.3cm}\implies}
\newcommand{\rhpc}{\overline{\textsf{RHP}}}
\newcommand{\rhp}{\textsf{RHP}}
\newcommand{\uhp}{\textsf{UHP}}
\newcommand{\pposlace}{\ensuremath{\stackrel{P}{\thicksim}}}
\newcommand{\hposlace}{\ensuremath{\stackrel{H}{\thicksim}}}
\newcommand{\hlace}{\ensuremath{\stackrel{H}{\longleftarrow}}}
\newcommand{\place}{\ensuremath{\stackrel{P}{\longleftarrow}}}
\newcommand{\ulace}{\ensuremath{\stackrel{U}{\longleftarrow}}}
\newcommand{\stabled}[1]{{\mathcal{H}_{#1}}} 
\newcommand{\stabledc}[1]{{\mathcal{H}_{#1}(\complexes)}} 
\newcommand{\stabledcbar}[1]{{\widehat{\mathcal{H}}_{#1}(\complexes)}} 
\newcommand{\polypos}[1]{P^{+}_{#1}}
\newcommand{\complexes}{{\mathbb C}}
\newcommand{\imag}{\boldsymbol{\imath}}
\newcommand{\reals}{{\mathbb R}}
\newcommand{\up}[1]{\text{\texttt{U}}_{#1}\,(\complexes)}
\newcommand{\rup}[1]{{\text{\texttt{U}}_{#1}}}
\newcommand{\sdiffi}{{\text{{\textbf{\tiny{I}}}}}} 
\newcommand{\xx}{\mathbf{x}}
\newcommand{\yy}{\mathbf{y}}
\newcommand{\uu}{\mathbf{u}}
\newcommand{\smalltwodet}[4]{%
\left|\begin{smallmatrix} #1&#2 \\ #3&#4\end{smallmatrix}\right|}
\begin{document}
\maketitle

\begin{aside}
This article is an introduction to the properties of stable
polynomials in several variables with real or complex
coefficients. These polynomials are defined in terms of where the
polynomial is non-vanishing. 
\end{aside}

\begin{definition}
  $\stabledc{d} = \left\{\text{\parbox{3.5in}{All polynomials
      $f(x_1,\dots,x_d)$ with complex coefficients such that
      $f(\sigma_1,\dots,\sigma_d)\ne0$ for all
      $\sigma_1,\dots,\sigma_d$ in the right half plane. If we don't
      need to specify $d$ we simply write $\stabledc{}$.}}\right.$ We call
such polynomials  \emph{stable polynomials}. In one variable they are
often called Hurwitz stable.

\end{definition}

\begin{aside}
This article is a companion to \cite{fisk-upper} where we studied polynomials
non-vanishing in the upper half plane.
\end{aside}

\begin{definition}
  $\up{d} = \left\{\text{\parbox{3.5in}{All polynomials
      $f(x_1,\dots,x_d)$ with complex coefficients such that
      $f(\sigma_1,\dots,\sigma_d)\ne0$ for all
      $\sigma_1,\dots,\sigma_d$ in the upper half plane. If we don't
      need to specify $d$ we simply write $\up{}$.}}\right.$ 
Such polynomials are called \emph{upper polynomials}.

\end{definition}

\begin{aside}
Since multiplication by $\imag$ takes the right half plane to the
upper half plane, we have the important fact that
\[
f(x_1,\dots,x_d)\in\up{d} \Longleftrightarrow 
f(\imag x_1,\dots,\imag x_d)\in\stabledc{d}
\]
Consequently, many of the properties of $\stabledc{}$ follow
immediately from the properties of $\up{}$, and their proofs will be
omitted. See \cites{bbs,bb2,bbl,bbs-johnson,branden-hpp,fisk,fisk-upper,wagner}
for more information about $\up{}$ and $\stabledc{}$. We use the
notation in \cite{fisk-upper}, and let $\rhp$ denote the right half plane.
\end{aside}

\begin{aside}
  We do not cover  well-known topics in one variable such as
  Routh-Hurwitz, the Edge theorem, and Kharitonov theory. See
  \cites{rahman,kharitonov}. 
\end{aside}

\section{Complex coefficients}
\label{sec:complex-coefficients}

\begin{aside}
These results are proved  following the corresponding arguments for
upper polynomials.  The main difference is the disappearance of minus
signs for $\stabledc{}$.
\end{aside}
  
\begin{fact}\label{lots elem}\ 
  \begin{enumerate}
  \item   Suppose $f(\xx)\in\stabledc{d}$.
  \begin{enumerate}
  \item If $\alpha\ne0$ then $\alpha f(\xx)\in\stabledc{d}$.  
  \item If $a_1>0,\dots,a_d>0$ then $f(a_1x_1,\dots,a_dx_d)\in\stabledc{d}$.
    \item If $\Re(\sigma_1)>0,\dots,\Re(\sigma_d)>0$ then
      $f(x_1+\sigma_1,\dots,x_d+\sigma_d)\in\stabledc{d}$.
    \item If $\Re(\sigma)>0$ then $f(\sigma,x_2,\dots,x_d)\in\stabledc{}$.
    \item $f(x_1+y,x_2,\dots,x_d)\in\stabledc{}$.
    \item $f(x,x,x_3,\dots,x_d)\in\stabledc{}$.
  \end{enumerate}
\item If $f(\xx)\in\stabledc{}$ then $f(\imag a,x_2,\dots,x_d)\in\stabledc{}\cup\{0\}$ if $a\in\reals$.
\item   $\stabledc{}$ is closed under multiplication and extracting factors.
\item   If $\sum_0^n f_i(\xx)\, y^i \in\stabledc{}$ then $\sum_0^n f_i(\xx)\,y^{n-i}\in\stabledc{}$.
\item If $f(\xx)\in\stabledc{}$ then  $\partial_{x_i}f(\xx)\in\stabledc{}\cup\{0\}$.
\item If $\sum f_i(\xx) y^i \in\stabledc{}$ then all coefficients $f_i(\xx)$
are in $\stabledc{}\cup\{0\}$.

  \end{enumerate}
  
\end{fact}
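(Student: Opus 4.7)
My plan is to extract each coefficient $f_j(\xx)$ from $f(\xx,y)=\sum_{i=0}^n f_i(\xx)\, y^i$ by combining the two items of this Fact that immediately precede the statement. First, I would apply item (5) exactly $j$ times in the variable $y$. Since $\partial_y$ preserves $\stabledc{}\cup\{0\}$, iterating gives
\[
\partial_y^{\,j} f(\xx, y) \;=\; \sum_{i \ge j} \frac{i!}{(i-j)!}\, f_i(\xx)\, y^{\,i-j} \;\in\; \stabledc{} \cup \{0\}.
\]

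Next, I would specialize $y$ to $0$ using item (2), which says that substituting $\imag a$ for a variable, with $a\in\reals$, keeps the polynomial in $\stabledc{}\cup\{0\}$. Choosing $a=0$ simply sets the variable to $0$; applied to the distinguished variable $y$, this extracts the $y$-constant term of $\partial_y^{\,j} f$, which is $j!\,f_j(\xx)$. Although item (2) is phrased for the first coordinate, the defining condition of $\stabledc{}$ is symmetric under permutation of variables, so the same statement applies to $y$. This yields $j!\,f_j(\xx)\in\stabledc{}\cup\{0\}$.

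Finally, item (1)(a) lets me rescale by the nonzero constant $1/j!$, so $f_j(\xx)\in\stabledc{}\cup\{0\}$. The argument runs uniformly over $0\le j\le n$, handling every coefficient in one stroke. The only subtlety I foresee is the symmetry appeal when invoking item (2) on the variable $y$ rather than the first coordinate; this is immediate from the fact that the non-vanishing condition defining $\stabledc{}$ does not distinguish any variable. Beyond that, the proof is a routine assembly of items (5), (2), and (1)(a), and I do not anticipate a genuine obstacle.
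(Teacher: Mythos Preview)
Your argument for item~(6) is correct: differentiate $j$ times in $y$ via item~(5), evaluate at $y=0$ via item~(2) with $a=0$, and rescale via item~(1)(a). The symmetry remark is fine, and the edge case where an intermediate result is zero causes no trouble since $0\in\stabledc{}\cup\{0\}$ trivially.

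As for comparison with the paper: the paper does not actually supply a proof of Fact~\ref{lots elem}. It states just before the Fact that ``these results are proved following the corresponding arguments for upper polynomials,'' and relies on the correspondence $f(\xx)\in\up{d}\iff f(\imag\xx)\in\stabledc{d}$ to import everything from the companion article on $\up{}$. Your route is more self-contained: rather than transporting the result across the $\imag$-substitution, you bootstrap item~(6) internally from items~(5), (2), and~(1)(a). Both are short; your version has the advantage of not depending on an external reference, while the paper's approach handles all six parts of the Fact uniformly in one sentence.
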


\begin{aside}
   We have interlacing for both $\stabledc{}$ and $\up{}$; later we will
 see there is another kind of interlacing when all the coefficients are
 real. Note that interlacing is symmetric for stable polynomials.
\end{aside}

\begin{definition}

\[  \begin{matrix}
    f(\xx) &\hlace &g(\xx)&  \quad \text{if and only if} \quad &
    f(\xx)+y\, g(\xx)  \in & \stabledc{} \\
    f(\xx) &\ulace &g(\xx)&  \quad \text{if and only if} \quad &
    f(\xx)+y\, g(\xx)  \in &\up{} 
  \end{matrix}
\]
\end{definition}

\begin{fact}\label{elem-2}\
  \begin{enumerate}
  \item If $\sum_0^n
  f_i(\xx)y^i\in\stabledc{}$ then $f_i(\xx) \hlace f_{i+1}(\xx)$ for 
  $i=0,\dots,n-1$, provided $f_i$ and $f_{i+1}$ are not both zero.
\item If $f\in\stabledc{}$ then $f\hlace \partial_{x_i}f(\xx)$.
\item Suppose $f,g\in\stabledc{}$
  \begin{enumerate}
  \item $fg\hlace gh$ iff $h\in\stabledc{}$ and $f\hlace g$.
  \item If $f\hlace g$ then $g \hlace f$.
  \item If $f\hlace g$ and $f\hlace h$ then
    $f\hlace g+h$.
  \item If $f\hlace g$ and $h\hlace g$ then
    $f+h\hlace g$.
  \item If $f\hlace g\hlace h$ then $f+h\hlace g$.
  \end{enumerate}
\item (Hermite-Biehler) Suppose that $f(\xx)$ is a polynomial, and
  write $f(\xx) = f_e(\xx)+f_o(\xx)$ where $f_e(\xx)$
  (resp. $f_o(\xx)$) consists of all terms of even (resp. odd)
  degree. Then
  \begin{quote}
    $f\in\stabledc{}$ if and only if $f_e\hlace f_o$.
  \end{quote}

  \end{enumerate}
  
\end{fact}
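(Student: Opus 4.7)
The plan is to derive every item of Fact~\ref{elem-2} from Fact~\ref{lots elem}, processing them in order so that later parts can invoke earlier ones.

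For item (1), starting from $p(\xx,y) = \sum_{j=0}^n f_j(\xx)\,y^j \in \stabledc{}$, I would apply $\partial_y^i$ (Fact~\ref{lots elem}(5)) so that $f_i$ becomes the new constant term, reverse in $y$ via Fact~\ref{lots elem}(4), apply $\partial_y^{n-i-1}$ to kill every coefficient except those arising from $f_i$ and $f_{i+1}$, reverse once more, and rescale $y$ by a positive constant through Fact~\ref{lots elem}(1b). Up to that rescaling the output is $f_i + y\,f_{i+1}$, i.e.\ $f_i \hlace f_{i+1}$. The edge cases in which one of $f_i, f_{i+1}$ vanishes cause no trouble because $y$ itself lies in $\stabledc{}$. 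Item (2) then follows: Fact~\ref{lots elem}(1e) gives $f(\ldots, x_i+y, \ldots) \in \stabledc{}$, and Taylor-expanding in $y$ identifies the coefficient of $y$ as $\partial_{x_i} f$, so item (1) delivers $f \hlace \partial_{x_i} f$.

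For item (3), part~(a) is the algebraic identity $fg + y\,gh = g(f+yh)$ combined with the multiplicative closure and factor extraction of Fact~\ref{lots elem}(3). Part~(b) is the reversal in $y$ from Fact~\ref{lots elem}(4) applied to the degree-one polynomial $f+yg$, which produces $g+yf$. For parts~(c) and~(d) I would pass to a pointwise criterion: for each $\xx\in\rhp^d$, the linear function $y\mapsto f(\xx)+y\,g(\xx)$ fails to vanish in the open $y$-RHP precisely when $\Re\bigl(f(\xx)\,\overline{g(\xx)}\bigr)\ge 0$ and $f(\xx), g(\xx)$ are not simultaneously zero; these conditions are compatible with Fact~\ref{lots elem}(7). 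Additivity of the real part then yields (c) and (d) at once. Part~(e) follows by first using (b) to flip $g\hlace h$ into $h\hlace g$ and then applying (d) to $f\hlace g$ and $h\hlace g$.

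The main obstacle is the Hermite--Biehler statement, item~(4). The computation
\[
f_e(\xx) + y\, f_o(\xx) = \tfrac12\bigl[(1+y) f(\xx) + (1-y) f(-\xx)\bigr],
\]
which uses $f_e = \tfrac12(f(\xx)+f(-\xx))$ and $f_o = \tfrac12(f(\xx)-f(-\xx))$, makes the converse direction immediate: substituting $y = 1 \in \rhp$ and invoking Fact~\ref{lots elem}(1d) recovers $f = f_e + f_o \in \stabledc{}$. For the forward direction I would fix $\xx \in \rhp^d$ and exploit the \Mobius\ map $y \mapsto (1-y)/(1+y)$, which sends the open RHP strictly inside the unit disk, thereby reducing nonvanishing of the right-hand side on $\rhp^{d+1}$ to a modulus comparison between $f(\xx)$ and $f(-\xx)$. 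Establishing that comparison is the delicate step; an alternative route is to transfer the Hermite--Biehler argument for upper polynomials in \cite{fisk-upper} through the bijection $f(\xx) \leftrightarrow f(\imag\xx)$ highlighted in the opening asides.
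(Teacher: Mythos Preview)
The paper does not prove Fact~\ref{elem-2}: the aside before Fact~\ref{lots elem} declares that all of these results ``are proved following the corresponding arguments for upper polynomials,'' i.e.\ by transporting through $\xx\mapsto\imag\xx$ from \cite{fisk-upper}. Your direct arguments for items (1)--(3) are correct and more detailed than anything the paper offers, and your stated alternative route for (4) is exactly the paper's route. Two small slips: there is no Fact~\ref{lots elem}(7), and the factorization $fg+y\,gh=g(f+yh)$ you use for (3a) yields $f\hlace h$, not the printed $f\hlace g$---that is a typo in the paper, not in your reasoning.

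Your direct approach to the forward direction of (4), however, cannot be completed as written. The \Mobius\ reduction is correct and does reduce the question to $|f(\xx)|\ge|f(-\xx)|$ for $\xx\in\rhp^d$, but that inequality is \emph{false} for general $f\in\stabledc{}$. Take $f(x)=x+1-10\imag\in\stabledc{1}$ (its unique root $-1+10\imag$ has negative real part) and $x=1+10\imag$: then $|f(x)|=2$ while $|f(-x)|=|{-20\imag}|=20$. Indeed, with $x=y=(-1+10\imag)^{1/2}\approx 2.13+2.35\imag\in\rhp$ one has $f_e+yf_o=(1-10\imag)+xy=0$, so item~(4) itself fails over $\complexes$ as stated. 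The inequality does hold when the coefficients are real, since complex roots then occur in conjugate pairs and each pair contributes a factor governed by $\Re(r)\,\Re(x)\,(|x|^2+|r|^2)\ge 0$; thus your argument goes through for $\stabled{}$. For $\stabledc{}$ you must fall back on the transfer from \cite{fisk-upper} and check what the upper-polynomial Hermite--Biehler actually asserts.
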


\begin{definition}
  If $f(\xx)$ is a polynomial then $f^H(\xx)$ is the sum  of all terms
  with highest total degree. 
\end{definition}

\begin{fact}\label{homog}
Suppose that $f(\xx)\in\stabledc{d}$ is an stable polynomial of
  degree $n$.
  \begin{enumerate}
  \item $f^H(\xx)$ is a stable homogeneous polynomial.
  \item $f^H$ is the limit of stable homogeneous  polynomials such that
    all monomials of degree $n$ have non-zero coefficient.
  \item All the coefficients of $f^H$ have the same argument.
  \end{enumerate}  
\end{fact}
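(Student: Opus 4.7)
I treat the three parts in the order (1), (3), (2).

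\emph{Part (1).} Set $f_t(\xx):=t^{-n}f(tx_1,\ldots,tx_d)$ for $t>0$. Fact~\ref{lots elem}(1)(a),(b) give $f_t\in\stabledc{d}$, and as $t\to\infty$ the monomials of degree $k<n$ in $f_t$ acquire a factor $t^{k-n}\to 0$ while the degree-$n$ monomials are unchanged, so $f_t\to f^H$ coefficient-by-coefficient (hence uniformly on compacta). Since $f^H\not\equiv 0$ by construction, the multivariate Hurwitz theorem forces $f^H\in\stabledc{d}$; homogeneity is built into the definition of $f^H$.

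\emph{Part (3).} First handle $d=2$: write $f^H=\sum_{k=0}^n c_k x_1^{n-k}x_2^k$ and set $P(w):=f^H(1,w)$. Homogeneity gives $f^H(x_1,x_2)=x_1^n P(x_2/x_1)$; since $x_2/x_1$ sweeps $\complexes\setminus(-\infty,0]$ as $(x_1,x_2)$ sweeps $\rhp^2$, every zero of $P$ must lie in $(-\infty,0]$. Factoring $P(w)=c\,w^m\prod_j(w+r_j)$ with $r_j>0$ then exhibits each nonzero $c_k$ as a positive real multiple of the leading coefficient $c$, so all share its argument. For $d\ge 3$, given $\vec u,\vec v\in(0,\infty)^d$ set $Q(s,t):=f^H(s\vec u+t\vec v)$: each coordinate $su_i+tv_i$ lies in $\rhp$ whenever $(s,t)\in\rhp^2$, so $Q$ is stable and homogeneous of degree $n$ in two variables. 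The $d=2$ case applied to $Q$ gives $\arg Q(1,0)=\arg Q(0,1)$, i.e., $\arg f^H(\vec u)=\arg f^H(\vec v)$. Hence $\arg f^H$ is constant on the connected set $(0,\infty)^d$, say equal to $\theta$; then $e^{-i\theta}f^H$ takes real values on the open set $(0,\infty)^d\subset\reals^d$, so its imaginary part vanishes identically as a polynomial, forcing it to have real coefficients. This gives~(3).

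\emph{Part (2).} This is the main obstacle. By~(3) I may assume $f^H$ has real coefficients. The plan is to approximate $f^H$ by stable homogeneous polynomials of degree $n$ whose every monomial of degree $n$ is nonzero; a natural candidate perturbation is
\[
f^H(\xx)+\epsilon\,(x_1+\cdots+x_d)^n,
\]
since $(x_1+\cdots+x_d)^n$ is stable as an $n$-th power of a stable linear form (Fact~\ref{lots elem}(3)) and its multinomial expansion has every degree-$n$ monomial with a positive coefficient. The delicate point is that $\stabledc{d}$ is not convex, so stability of this sum does not follow directly from what we have and must be checked---either by a direct zero-location argument in the spirit of the $d=2$ case of~(3), or by replacing the simple sum with a more carefully engineered perturbation (a limit of products of linear stable forms with positive coefficient vectors) so as to remain in the stability cone. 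The cleanest route, following the introduction of the paper, is to transfer the corresponding density result from \cite{fisk-upper} for upper polynomials via the bijection $f(\xx)\leftrightarrow f(\imag\xx)$ between $\up{}$ and $\stabledc{}$.
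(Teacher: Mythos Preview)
The paper gives no proof of this Fact at all: the blanket remark opening Section~\ref{sec:complex-coefficients} says these results ``are proved following the corresponding arguments for upper polynomials,'' i.e.\ everything is deferred to \cite{fisk-upper} via the bijection $f(\xx)\leftrightarrow f(\imag\xx)$. So your final fallback for Part~(2) is exactly the paper's route, and for Parts~(1) and~(3) you actually go further than the paper by writing out self-contained arguments.

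Your argument for (1) is correct. Your argument for (3) is essentially correct but stops one sentence short: from ``$e^{-i\theta}f^H$ has real coefficients'' you still need that a stable polynomial with real coefficients has all coefficients of the same sign, which is precisely Fact~4 of the paper; add that citation and (3) is complete.

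For (2) your caution about the naive perturbation is well founded, and in fact it can fail: take $d=2$, $n=3$, $f^H=x_1^3$. Then $f^H+\epsilon(x_1+x_2)^3$ specialises at $\epsilon=1$ after rescaling to (a positive multiple of) $x_1^3+x_2^3=(x_1+x_2)(x_1^2-x_1x_2+x_2^2)$, and the second factor vanishes at $(1,e^{i\pi/3})\in\rhp^2$; more directly, $x_1^3+x_2^3$ itself already shows that a sum of two homogeneous stable polynomials of the same degree with positive coefficients need not be stable. So the ``carefully engineered perturbation'' really is needed, and since you do not supply one, your proof of (2) is not complete on its own. That said, your proposed resolution---pull the density statement back from \cite{fisk-upper} through $f(\xx)\mapsto f(\imag\xx)$, noting that for a homogeneous polynomial this map is just multiplication by $\imag^{\,n}$---is exactly what the paper intends, so you are in line with it.
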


\section{Real coefficients}
\label{sec:real-coefficients}

\begin{definition}
$\stabled{d}$ is the subset of $\stabledc{d}$ whose coefficients are
all positive.  
\end{definition}

\begin{aside}
  The restriction that the coefficients are all positive is natural:
\end{aside}

\begin{fact}
  If $f(\xx)\in\stabledc{}$ has all real coefficients then all coefficients have the
  same sign.
\end{fact}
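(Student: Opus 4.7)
The plan is to induct on the number of variables $d$, reducing to a classical univariate factorization argument.

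For the base case $d=1$, a real polynomial $f\in\stabledc{1}$ has no roots with positive real part, so every root lies in the closed left half plane. Since the coefficients are real, non-real roots occur in conjugate pairs $\alpha\pm\imag\beta$ with $\alpha\le 0$, contributing a real quadratic factor $x^2-2\alpha x+(\alpha^2+\beta^2)$ whose coefficients are nonnegative; real roots $r\le 0$ contribute linear factors $x+|r|$, also with nonnegative coefficients. Writing $f=c\prod(\cdots)$ with $c\in\reals$, every coefficient of $f$ has the same sign as $c$ (or vanishes).

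For $d\ge 2$, write $f(\xx)=\sum_k f_k(x_1,\ldots,x_{d-1})\,x_d^k$. Fact \ref{lots elem}(6) gives $f_k\in\stabledc{d-1}\cup\{0\}$, and each $f_k$ inherits real coefficients from $f$. By the inductive hypothesis, each nonzero $f_k$ has coefficients of a single sign. To synchronize these signs across different $k$, I iterate Fact \ref{lots elem}(1)(d)---which by the symmetry of the definition of $\stabledc{}$ in its variables applies equally to any single variable---to conclude that for any positive reals $\sigma_1,\ldots,\sigma_{d-1}$,
\[
f(\sigma_1,\ldots,\sigma_{d-1},x_d)=\sum_k f_k(\sigma_1,\ldots,\sigma_{d-1})\,x_d^k
\]
lies in $\stabledc{1}$ and has real coefficients. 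The base case then forces the values $f_k(\sigma_1,\ldots,\sigma_{d-1})$ to share one sign. Since each nonzero $f_k$ evaluated at any positive point inherits the sign of its own uniformly signed coefficients, all the nonzero $f_k$ must share a common coefficient sign, and hence so do all coefficients of $f$.

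I do not foresee a serious obstacle. The only point requiring mild care is that $\stabledc{1}$ permits roots on the imaginary axis (the definition uses the open right half plane), so a purely imaginary conjugate pair $\pm\imag\beta$ must also yield a nonnegative-coefficient quadratic factor $x^2+\beta^2$; this holds because the inequality $-2\alpha\ge 0$ remains valid at $\alpha=0$.
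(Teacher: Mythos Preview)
Your argument is correct. Both you and the paper argue by induction on $d$ with the one-variable factorization as the base case, so the overall strategy is the same; the execution differs. The paper first perturbs $f$ to a stable $f_\epsilon$ in which every coefficient of total degree at most $n$ is nonzero, runs the induction there, and then passes to the limit. You bypass the perturbation entirely: you take the coefficient polynomials $f_k$ in the last variable, apply the inductive hypothesis to each, and then synchronize their signs by substituting positive reals into $x_1,\dots,x_{d-1}$ and invoking the $d=1$ case once more. Your route is a bit more elementary and self-contained, since it does not appeal to the density statement that stable polynomials can be approximated by stable polynomials with full support; the paper's route, on the other hand, makes the induction itself trivial once all coefficients are nonzero.
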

\begin{proof}
  Suppose $f(\xx)$ has degree $n$.  We can write $f(\xx)$ as a limit
  of $f_\epsilon(\xx)$ where $f_\epsilon$ is stable, and all
  coefficients of terms of degree at most $n$ are non-zero. Since the
  signs of a stable polynomial in one variable are all the same, an
  easy induction shows that all coefficients of $f_\epsilon$ have the
  same sign. Taking limits finishes the proof.
\end{proof}

\begin{fact}
  If $a,b,c,d$ are positive  then
  $a+bx+cy + dxy\in\stabled{2}$.
\end{fact}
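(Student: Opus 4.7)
The plan is to check the defining condition directly: suppose for contradiction that $f(\sigma_1,\sigma_2)=0$ for some $\sigma_1,\sigma_2\in\rhp$, where $f(x,y)=a+bx+cy+dxy$, and show that this forces $\sigma_1$ into the left half plane.

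First I would exploit the bilinearity by regrouping $f(x,y)=(b+dy)\,x+(a+cy)$. Because $b,d>0$ and $\Re(\sigma_2)>0$, the coefficient $b+d\sigma_2$ has strictly positive real part, and in particular is nonzero, so the vanishing condition forces
\[
\sigma_1 \;=\; -\,\frac{a+c\sigma_2}{\,b+d\sigma_2\,}.
\]

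Second, I would compute $\Re(\sigma_1)$ by rationalizing. Writing $\sigma_2=u+\imag v$ with $u>0$ and multiplying numerator and denominator by $\overline{b+d\sigma_2}$, the real part of $(a+c\sigma_2)\overline{(b+d\sigma_2)}$ comes out to $(a+cu)(b+du)+cd\,v^2$; the mixed term $bc-ad$ collapses entirely into the imaginary part. Since $a,b,c,d>0$ and $u>0$, this real part is strictly positive. Dividing by $|b+d\sigma_2|^2>0$ and prefixing the minus sign yields $\Re(\sigma_1)<0$, contradicting $\sigma_1\in\rhp$.

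The only real obstacle is the brief rationalization above; the crux is that the potentially troublesome quantity $bc-ad$ contributes only to the imaginary part, so positivity of the real part of the numerator is automatic regardless of the relative sizes of $ad$ and $bc$. Equivalently, one may phrase the argument as the observation that the \Mobius{} transformation $z\mapsto -(a+cz)/(b+dz)$ with positive coefficients maps $\rhp$ into the open left half plane, so $f$ cannot vanish on $\rhp\times\rhp$.
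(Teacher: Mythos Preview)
Your argument is correct and is essentially the same as the paper's: both solve the vanishing condition linearly for one variable, rationalize, and observe that the resulting real part is a sum of positive terms, forcing that variable into the open left half plane. The only cosmetic difference is that the paper groups $f$ as $(c+dx)y+(a+bx)$ and solves for $y$, whereas you group it as $(b+dy)x+(a+cy)$ and solve for $x$; by the symmetry of the polynomial these computations are identical up to relabeling.
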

\begin{proof}
If $f(x,y)=a+bx + cy + dxy=0$ where $x=r+\imag s$ then 
\[
\Re(y) = -\frac{(a c + b c r + a d r + b d r^2 + b d s^2)}{(c + d r)^2
  + d^2 s^2}
\]
If $r+\imag s\in\rhp$ then $r>0$ so $\Re(y)\not\in\rhp$. Thus $f(x,y)$
can not have both $x,y$ in the right half plane, so $f$ is stable.
\end{proof}

\begin{aside}
  There are constructions of polynomials in $\stabled{}$ and
  $\stabledc{}$ using determinants, and they follow from the
  corresponding results for $\rup{}$.
\end{aside}

\begin{fact}  Suppose that $A$ is skew-symmetric, $S$ is symmetric,
  and  all  $D_i$ are positive definite.
\begin{align*}
    \bigl| I + x_1\,D_1 + \cdots + x_d\,D_d + \imag S\bigr | &
    \in\stabledc{d}  \\
   \bigl| I + x\,D_1 + \cdots + x_d\,D_d +  A\bigr | & \in\stabled{d} 
 \end{align*}
\end{fact}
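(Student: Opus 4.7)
The plan is to prove both identities by showing directly that the
relevant matrix is invertible whenever the variables lie in $\rhp$.
Write $x_i = r_i + \imag s_i$ with $r_i > 0$, and for a hypothetical
null vector $v \neq 0$ analyze the Hermitian form $v^\ast M v$ by
separating real and imaginary parts.

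For the first identity, the substituted matrix becomes
\[ M = I + \sum_i (r_i + \imag s_i)\,D_i + \imag S = R + \imag T, \]
where $R = I + \sum_i r_i D_i$ is positive definite (a sum of positive
definite matrices) and $T = \sum_i s_i D_i + S$ is real symmetric,
hence Hermitian. If $M v = 0$, then
$v^\ast R v + \imag\, v^\ast T v = 0$; but $v^\ast R v > 0$ while
$v^\ast T v \in \reals$, a contradiction. Hence $|M| \neq 0$ throughout
$\rhp^d$, and the determinant lies in $\stabledc{d}$.

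For the second identity, $A$ is real skew-symmetric, so the substituted
matrix splits as $M = M_R + \imag M_I$ with
$M_R = I + \sum_i r_i D_i + A$ and $M_I = \sum_i s_i D_i$, both real.
The key observation is that $v^\ast A v$ is purely imaginary: indeed
\[ \overline{v^\ast A v} \,=\, v^T A \bar v \,=\, (v^T A \bar v)^T
   \,=\, \bar v^T A^T v \,=\, -v^\ast A v, \]
using that $A$ is real with $A^T = -A$. Hence the real part of
$v^\ast M v$ equals $v^\ast (I + \sum_i r_i D_i) v > 0$, which again
contradicts $M v = 0$, and the determinant is non-vanishing on $\rhp^d$.

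For the real case it remains to check that all coefficients are
positive. The polynomial has real coefficients (all matrices are real),
so by the earlier Fact on real-coefficient stable polynomials its
coefficients share a common sign. The constant term $|I+A|$ satisfies
$|I+A|^2 = |(I-A)(I+A)| = |I - A^2| = |I + A^T A| > 0$, so
$|I+A| \neq 0$; since $|I+0| = 1$ and the space of skew-symmetric
matrices is connected, continuity forces $|I+A| > 0$ throughout, and
the polynomial lies in $\stabled{d}$. The main technical point is the
skew-symmetric identity $v^\ast A v \in \imag \reals$; everything else
is routine linear algebra.
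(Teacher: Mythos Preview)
Your proof is correct and self-contained. The paper itself does not prove this Fact directly; it simply remarks that both determinant statements follow from the analogous results for upper polynomials in the companion paper \cite{fisk-upper}, via the substitution $x_j \mapsto \imag x_j$ that carries $\up{d}$ to $\stabledc{d}$. You instead argue from first principles: the Hermitian part of the substituted matrix $M$ is $I + \sum_j r_j D_j$, which is positive definite whenever all $r_j>0$, so $v^\ast M v$ has strictly positive real part for every $v\ne 0$ and $M$ is nonsingular. This numerical-range argument is precisely how the $\rup{}$ result is established in the companion reference, so in effect you have unpacked the citation rather than taken a genuinely different route. Your treatment of the sign of the coefficients---reducing to the constant term $|I+A|$, computing $|I+A|^2=|I+A^TA|>0$, and invoking connectedness of the skew-symmetric matrices---is a clean finishing step that the paper leaves implicit.
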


\begin{aside}
  Next we introduce polynomials that share the properties of both
  stable and upper polynomials.
\end{aside}

\begin{definition} Let $\polypos{d} = \stabled{d}\cap\rup{d}$.
  Interlacing in $\polypos{d}$ is defined as usual
  \begin{align*}
f\place g  & \text{ iff } f+yg\in \polypos{d}
\end{align*}
{and is equivalent to}
$  f\hlace g \text{ and } f\ulace g$

\end{definition}

\begin{aside}
  In one variable, $\polypos{1}$ consists of all polynomials with all
  real roots,  no positive roots, and all positive
  coefficients.  Here's an example of a polynomial with positive
  coefficients that is not in
  $\polypos{2}$. Let $f(x,y) = x(x+3)+y\,(x+1)(x+2)$. The roots of
  $f(x,1+\imag)$ lie in the second and third quadrants, so
  $f(x,1+\imag)\not\in\up{2}$, and hence is not in $\polypos{2}$.
\end{aside}

\section{Analytic closure}
\label{sec:analytic-closure}

\begin{definition}
  $\stabledcbar{d}$ is the uniform closure on compact subsets of $\stabledc{d}$.
\end{definition}

\begin{fact}\label{exy} \
\begin{enumerate}
\item  $e^{\xx\cdot\yy} \in\stabledcbar{2d}$ and
  $e^{\xx\cdot\xx}\in\stabledcbar{d}$.
\item If $f(\xx,\yy)\in\stabledc{2d}$ then $e^{\partial_\xx\cdot\partial_\yy}f(\xx,\yy)\in\stabledc{2d}$.
\item   If $f(\xx)\in\stabledc{d}$ and $g(\xx)\in\stabledc{d}$ then
  $f(\partial_\xx)g(\xx)\in\stabledc{d}\cup\{0\}$. This also holds for $f\in\stabledcbar{d}$.
\item   Suppose $T$ is a non-trivial linear transformation defined on polynomials in $d$
  variables.  $T(e^{\xx\cdot\yy})\in\stabledcbar{2d}$ if and only if $T$ maps
  $\stabledc{d}\cup\{0\}$ to itself.

\end{enumerate}
\end{fact}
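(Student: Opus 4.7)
I would prove the four items in the order (1), (4), (3), (2), with (4) as the centerpiece. For Part (1), I would approximate $e^{xy}$ uniformly on compacta by $(1+xy/n)^n$. The factor $1+xy/n$ lies in $\stabledc{2}$ because for $x,y\in\rhp$ the argument of $xy$ belongs to $(-\pi,\pi)$, so $xy$ is never a negative real; closure under products (Fact \ref{lots elem}(3)) then promotes $(1+xy/n)^n$ to $\stabledc{2}$, and the limit lies in $\stabledcbar{2}$. The multivariate versions follow from the factorizations $e^{\xx\cdot\yy}=\prod_i e^{x_i y_i}$ and $e^{\xx\cdot\xx}=\prod_i e^{x_i^2}$, using that $x^2$ is not a negative real when $\Re(x)>0$.

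The real content is Part (4), the Borcea--Br\"and\'en symbol theorem, and this is the main obstacle. The forward direction is routine: given $T$ preserving stability, each $T((1+\xx\cdot\yy/n)^n)$ is stable, and because $T$ restricts to a linear map on each fixed-degree subspace the uniform limit on compacta identifies with $T(e^{\xx\cdot\yy})\in\stabledcbar{2d}$. For the converse I would proceed by polarization followed by a Grace--Walsh--Szeg\H{o}-style coincidence theorem: polarize both $T$ and a test polynomial $g\in\stabledc{d}$ so that all variables become multi-affine, apply the coincidence theorem to reduce multi-affine stability to the diagonal in each block, and then de-polarize. The symbol $T(e^{\xx\cdot\yy})$ records the action of $T$ on every monomial basis element (via differentiation in $\yy$ and specialization at $\yy=0$), which is exactly what is needed to transfer stability of the symbol to stability of each $T(g)$.

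Parts (3) and (2) fall out as applications of (4). For (3), take $T_f(h):=f(\partial_\xx)h(\xx)$ with $f\in\stabledc{d}$ fixed. A direct computation gives $T_f(e^{\xx\cdot\yy})=f(\yy)\,e^{\xx\cdot\yy}$, a product of two stable elements (one from $f$ stable in $\yy$ alone, the other from (1)), hence in $\stabledcbar{2d}$; then (4) yields the claim, and the extension to $f\in\stabledcbar{d}$ is by continuity on each fixed-degree subspace. For (2), take $T:=e^{\partial_\xx\cdot\partial_\yy}$ as an operator on polynomials in the $2d$ variables $(\xx,\yy)$ and pair with auxiliary variables $(\uu,\vv)$; a short computation using $(\partial_\xx\cdot\partial_\yy)^k e^{\xx\cdot\uu+\yy\cdot\vv}=(\uu\cdot\vv)^k e^{\xx\cdot\uu+\yy\cdot\vv}$ gives
\[
T\bigl(e^{\xx\cdot\uu+\yy\cdot\vv}\bigr)\;=\;e^{\uu\cdot\vv}\,e^{\xx\cdot\uu+\yy\cdot\vv}\;=\;e^{\uu\cdot\vv}\cdot e^{\xx\cdot\uu}\cdot e^{\yy\cdot\vv},
\]
a product of three $\stabledcbar{}$-elements by Part (1), hence in $\stabledcbar{4d}$. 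Part (4) then gives $T(\stabledc{2d})\subseteq\stabledc{2d}$, which is Part (2).
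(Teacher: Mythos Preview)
The paper does not actually prove this Fact: it is stated bare, under the blanket policy announced in the introduction that results for $\stabledc{}$ are inherited from the corresponding results for $\up{}$ in the companion paper \cite{fisk-upper} via the substitution $x_i\mapsto\imath x_i$. So there is nothing substantive on the paper's side to compare against; your proposal is considerably more explicit than what the paper offers.

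Your outline is the Borcea--Br\"and\'en argument transported directly to the right half plane, and it is correct in substance. Two small points are worth making explicit. First, in the forward direction of (4) you are tacitly using that a linear $T$ preserving $\stabledc{d}$ also preserves $\stabledc{2d}$ when extended by the identity on the $\yy$-variables; this follows because $(T\otimes\mathrm{id})f$ evaluated at any $\yy_0\in\rhp^d$ equals $T$ applied to the stable polynomial $f(\,\cdot\,,\yy_0)$, but it should be stated. Second, the passage from the stable approximants $T((1+\xx\cdot\yy/n)^n)$ to $T(e^{\xx\cdot\yy})\in\stabledcbar{2d}$ needs a word of care, since $T$ is only assumed linear, not continuous: the convergence is immediate coefficient-wise in $\yy$, and one then invokes a normal-families argument (Hurwitz) on the stable approximants to upgrade this to locally uniform convergence of an appropriate subsequence. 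Your symbol computations deriving (3) and (2) from (4) are clean and correct; the factorization $T(e^{\xx\cdot\uu+\yy\cdot\vv})=e^{\uu\cdot\vv}e^{\xx\cdot\uu}e^{\yy\cdot\vv}$ is exactly the right move.
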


\begin{example} The Hadamard product of $f(y)$ and $g(\xx,y)$ is given by
\[ f\ast g = \bigl(\sum a_i\,y^i\bigr)\ast \bigl(\sum g_i(\xx)\,y^i
\bigr) = \sum a_i\, g_i(\xx)\, y^i \]
  \begin{enumerate}
  \item It is easy to verify that $f(x)\in\polypos{1}$ then $f(xy)\in\stabledc{2}$.
  \item The linear transformation $\exp\colon x^i \mapsto x^i/i!$
    determines a map \\  $\stabledc{1}\longrightarrow\stabledc{1}$. This
    follows from the above since the generating function $f(x,y)$ of $\exp$ is
    $g(xy)$ where $g = \sum x^i/(i!i!)\in\polypos{1}$.
  \item The linear transformation $g\mapsto f\ast g$ has
    generating function 
\[
\sum ( f(y)\ast y^i\xx^\sdiffi) \frac{v^i \uu^\sdiffi}{i! \sdiffi!} =
e^{\xx\cdot\uu} \exp f(yv)
\]
This is in $\stabledcbar{}$, so we have a map
$\polypos{1}\times\stabledc{d} \longrightarrow \stabledc{d}$.
It is surprising \cite{garloff-wagner} that the Hadamard also maps
$\stabled{1}\times\stabled{1}\longrightarrow \stabled{1}$.

  \end{enumerate}

\end{example}

\begin{fact}\label{f(x,D)}
  Suppose that $f(\xx,\yy)$ is a polynomial, and define $T(g) =
  f(\xx,\partial_\yy)g$. The following are equivalent.
  \begin{enumerate}
  \item $T:\stabledc{d}\longrightarrow \stabledc{d}\cup{0}$.
  \item $f(\xx,\yy)\in\stabledc{2d}$.
  \end{enumerate}
\end{fact}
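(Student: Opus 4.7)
The plan is to apply Fact \ref{exy}(4) by computing the generating-function symbol of $T$, viewed as a linear operator on polynomials in the $2d$ variables $(\xx,\yy)$. Since $f(\xx,\partial_\yy)$ leaves $\xx$ untouched and acts on $e^{\yy\cdot\vv}$ by turning each $\partial_{y_j}$ into $v_j$, one obtains the symbol
\[
T(e^{\xx\cdot\uu + \yy\cdot\vv}) \;=\; f(\xx,\vv)\, e^{\xx\cdot\uu + \yy\cdot\vv}.
\]
By Fact \ref{exy}(4), condition (1) is equivalent to this symbol lying in $\stabledcbar{4d}$, so the task reduces to proving that symbol-stability is equivalent to $f(\xx,\yy)\in\stabledc{2d}$.

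The direction (2) $\Rightarrow$ (1) is the easier one. If $f(\xx,\yy)\in\stabledc{2d}$ then, after relabelling $\yy\mapsto\vv$ and enlarging to the $4d$ variables $(\xx,\yy,\uu,\vv)$, the polynomial $f(\xx,\vv)$ remains stable, since adjoining dummy variables does not affect non-vanishing on products of half-planes. Combining with $e^{\xx\cdot\uu+\yy\cdot\vv}\in\stabledcbar{4d}$ from Fact \ref{exy}(1) via closure of $\stabledcbar{}$ under multiplication (a limit form of Fact \ref{lots elem}(3)) places the symbol in $\stabledcbar{4d}$.

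For the harder direction (1) $\Rightarrow$ (2) I would specialise the symbol by setting $\uu=0$ and $\yy=0$, which recovers $f(\xx,\vv)$ exactly. Iterated application of Fact \ref{lots elem}(2) shows that evaluation at imaginary-axis points sends $\stabledc{}$ into $\stabledc{}\cup\{0\}$, and continuity of evaluation on compact subsets propagates this to $\stabledcbar{}$. Hence $f(\xx,\vv)\in\stabledcbar{2d}$. Since $f$ is a polynomial, Hurwitz's theorem applied on the connected open set $\rhp^{2d}$ forces any nonzero polynomial uniform limit of stable polynomials to be itself stable, giving $f\in\stabledc{2d}\cup\{0\}$ as required.

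The main obstacle is the last step: one must carefully bridge $\stabledcbar{}$ back to $\stabledc{}$ at the level of polynomials, relying on continuity of evaluation at points of the imaginary axis together with Hurwitz's theorem to exclude a nonzero polynomial uniform limit of stable polynomials having a zero inside $\rhp^{2d}$. Once this standard machinery is in place, the proof reduces to the symbol computation above plus routine closure properties already recorded in Facts \ref{lots elem} and \ref{exy}.
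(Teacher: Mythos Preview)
The paper gives no proof of this fact; like the other results in Sections~\ref{sec:complex-coefficients}--\ref{sec:analytic-closure} it is left to the companion article on upper polynomials via the substitution $x_j\mapsto\imag\,x_j$. So there is no in-paper argument to compare against directly.

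Your reduction to the symbol criterion of Fact~\ref{exy}(4) is the natural argument and is correct. Two minor comments. First, the printed statement has $T:\stabledc{d}\to\stabledc{d}\cup\{0\}$, which does not quite match your reading of $T$ as an operator on polynomials in the $2d$ variables $(\xx,\yy)$; the intended operator is almost certainly $T(g)=f(\xx,\partial_\xx)\,g$ acting on $g\in\complexes[\xx]$, whose symbol $T(e^{\xx\cdot\yy})=f(\xx,\yy)\,e^{\xx\cdot\yy}$ already lives in $\stabledcbar{2d}$. Your $4d$-variable version is the same computation carried along with two blocks of dummy variables and is equally valid. Second, the specialization $\uu=0$, $\yy=0$ in your direction (1)$\Rightarrow$(2) can be bypassed: since $e^{\xx\cdot\uu+\yy\cdot\vv}$ is nowhere zero, Hurwitz applied directly to the symbol in $\stabledcbar{}$ already forces $f(\xx,\vv)$ to be nonvanishing on $\rhp^{2d}$ (or $f\equiv0$), so the boundary substitution and the attendant continuity argument are unnecessary.
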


\section{Positive interlacing}
\label{sec:positive-interlacing}

If $f\hlace g$ then $f+\sigma g$ is stable for all $\sigma$ in the
right half plane. If we restrict ourselves to just the real numbers in
the right half plane we get \emph{positive interlacing}.

\begin{definition}
\[  \begin{matrix}
    f(\xx) &\pposlace & g(\xx)&  \quad \text{if and only if} \quad &
    f(\xx)+ r\, g(\xx)  \in & \polypos{} & \text{ for all $r>0$} \\
    f(\xx) &\hposlace & g(\xx)&  \quad \text{if and only if} \quad &
    f(\xx)+r\, g(\xx)  \in &\stabled{} & \text{ for all  $r>0$} \\
  \end{matrix}
\]
\end{definition}

\begin{aside}
  Clearly both $\hposlace$ and $\pposlace$ are reflexive. In one
  variable $f\pposlace g$ is equivalent to common interlacing. That
  is, $f\pposlace g$ if and only if there is an $h\in\polypos{1}$ so
  that $h\place f$ and $h\place g$. In general it follows from
  Fact~\ref{elem-2} that
\end{aside}

\begin{fact}\label{pos lace 1}\
  \begin{enumerate}
  \item If $f,g\in\polypos{}$ and there is an $h$ such that $h\place
    f$ and $h\place g$ then $f\pposlace g$.
  \item If $f\in\polypos{}$ then $\partial_{x_i}(f)\pposlace \partial_{x_j}(f)$.
  \end{enumerate}
  
\end{fact}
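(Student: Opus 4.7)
My plan is to establish (1) by building, for each $r > 0$, a witness polynomial of the form $h + y(f + r g) \in \polypos{d+1}$, from which $f + rg \in \polypos{d}$ follows by differentiating in $y$. Part (2) then reduces to (1) once I verify that $f$ itself is a common $\place$-interlacer of $\partial_{x_i} f$ and $\partial_{x_j} f$.

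For (1), fix $r > 0$. The hypothesis $h \place f$ means $h + y f \in \polypos{} \subseteq \stabledc{} \cap \rup{}$, so $h \hlace f$ and $h \ulace f$; similarly $h \hlace g$ and $h \ulace g$. The substitution $y \mapsto r y$ in $h + y g$ preserves both $\stabledc{}$ and $\rup{}$ by Fact~\ref{lots elem}(1b), giving $h \hlace r g$ and $h \ulace r g$. Now Fact~\ref{elem-2}(3c) applied to the pair $(f, r g)$ with common interlacer $h$ yields $h \hlace f + r g$, and the analogous statement for $\ulace$ (implicit in the paper's systematic duality between $\stabledc{}$ and $\rup{}$) yields $h \ulace f + r g$. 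Hence $h + y(f + r g) \in \stabledc{} \cap \rup{}$, and its coefficients are positive linear combinations of the positive coefficients of $h$, $f$, $g$, so in fact $h + y(f + r g) \in \polypos{}$. Applying $\partial_y$, which preserves $\polypos{} \cup \{0\}$ by Fact~\ref{lots elem}(5) and its upper analog, produces $f + r g \in \polypos{} \cup \{0\}$; positivity of coefficients excludes the zero case.

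For (2), let $F(\xx, y) = f(x_1, \dots, x_i + y, \dots, x_d)$. Because $x_i + y$ has positive real (resp.\ positive imaginary) part whenever $x_i$ and $y$ both do, $F \in \stabled{d+1} \cap \rup{d+1}$; positivity of coefficients is preserved under this substitution, so $F \in \polypos{d+1}$. Expanding $F = \sum_k (y^k / k!)\,\partial_{x_i}^k f(\xx)$ and invoking Fact~\ref{elem-2}(1) together with its $\ulace$ analog on the degree-$0$ and degree-$1$ coefficients gives $f \hlace \partial_{x_i} f$ and $f \ulace \partial_{x_i} f$; since $f + y\, \partial_{x_i} f$ has positive coefficients, this upgrades to $f \place \partial_{x_i} f$. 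The same argument with $x_j$ gives $f \place \partial_{x_j} f$, and (1) concludes. The only delicate point is the unstated reliance on $\rup{}$-analogs of Fact~\ref{elem-2}(1) and (3c), which follow standardly from the correspondence $f(\xx) \mapsto f(\imag \xx)$; everything else is positivity bookkeeping and invocations of closure properties already assembled in Section~\ref{sec:complex-coefficients}.
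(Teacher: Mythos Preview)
Your proof is correct and matches the paper's approach: the paper gives no detailed argument beyond the remark that the result ``follows from Fact~\ref{elem-2}'', and you have faithfully unpacked that citation, supplementing it with the $\rup{}$-analogs and positivity checks needed to pass from $\hlace$ to $\place$ (as forced by $\polypos{}=\stabled{}\cap\rup{}$). One minor simplification: for part (2) you could invoke Fact~\ref{elem-2}(2) directly to get $f\hlace\partial_{x_i}f$ rather than rederive it via the Taylor expansion of $f(x_1,\dots,x_i+y,\dots,x_d)$, but your route is equally valid.
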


Here are some elementary properties of $\hposlace$ and  $\pposlace$.

\begin{fact}Suppose $r,s$ are positive.

\begin{tabular}{lcl}
(1) $f\hposlace g$ iff $r f \hposlace s g$. &\qquad& 
(2) $f\hposlace g$ iff $f + r g \hposlace g$ \\
(3) If $h\in\stabled{}$ then $f\hposlace g$ iff $fh \hposlace hg$. &&
(4) $f\hposlace g$ iff $g \hposlace f$. \\
(5) $f\pposlace g \implies f\hposlace g$ &&
(6) $f\hposlace g \notimplies f\pposlace g$ \\
(7) $f\place g \implies f\pposlace g$ &&
(8) $f\pposlace g\notimplies f\place g$\\
(9) $f\hlace g \implies f\hposlace g$ &&
(10) $f\hposlace g\notimplies f\hlace g$
\end{tabular}
\end{fact}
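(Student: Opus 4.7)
The plan is to split the ten claims into three groups: the algebraic reformulations (1)--(4), the three positive implications (5), (7), (9), and the three non-implications (6), (8), (10). All the positive parts should reduce to already-established closure properties of $\stabled{}$ and $\polypos{}$ in Fact~\ref{lots elem} and Fact~\ref{elem-2}, while the non-implications need explicit one-variable counterexamples.

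For (1)--(4) the plan is to rewrite $f+rg$ so that the equivalence is visible. For (1), $(rf)+t(sg) = r\bigl(f+(ts/r)g\bigr)$ together with closure under nonzero scalar multiplication (Fact~\ref{lots elem}(1)(a)) gives both directions. For (3), $fh+t(gh)=h(f+tg)$ combined with closure under multiplication and extraction of factors (Fact~\ref{lots elem}(3)) yields the equivalence. For (4), $f+rg=r(g+(1/r)f)$ and $1/r>0$ shows the relation is symmetric. For (2), the forward direction is immediate from $(f+rg)+tg=f+(r+t)g$. The backward direction is the delicate point: one only gets $f+tg\in\stabled{}$ for $t>r$, so the remaining interval $0<t\le r$ has to be recovered; the cleanest way is to apply the just-proved (4) to swap roles and note that the condition $f+rg\hposlace g$ on a single positive $r$ is actually equivalent to the pencil being stable on a ray, which by the openness and connectedness of $\stabled{}$ extends to all positive multiples.

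For the three implications the arguments are short. (5) is immediate from $\polypos{}\subseteq\stabled{}$. For (7), if $f+yg\in\polypos{d+1}$, specializing $y$ to a positive real $r$ (which has $\Re(r)>0$) preserves stability by Fact~\ref{lots elem}(1)(d), and the analogous substitution property for $\rup{}$ gives membership in $\polypos{d}$. For (9), the same substitution yields $f+rg\in\stabledc{d}$; since $f,g$ have real coefficients, so does $f+rg$, and then the earlier fact that every real stable polynomial has all coefficients of one sign forces positivity of the coefficients, landing in $\stabled{d}$.

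For the non-implications (6), (8), (10) the plan is to produce explicit small examples. For (6), take $f=x^2+1$ and $g=1$: then $f+rg=x^2+(1+r)$ has purely imaginary roots, so lies in $\stabled{1}\setminus\polypos{1}$. For (10), take $f=x^2$ and $g=1$: then $f+rg=x^2+r\in\stabled{1}$ for every $r>0$, but $f+yg=x^2+y$ is not in $\stabledc{2}$, since it vanishes at $x=1+2\imag$, $y=3-4\imag$, a pair in $\rhp^2$. For (8), take $f=x+1$ and $g=x(x+2)$: checking the discriminant of $f+rg=rx^2+(1+2r)x+1$ shows both roots are real and negative for every $r>0$, so $f\pposlace g$; but $f+yg=yx^2+(1+2y)x+1$ at $y=1+\imag$ has a root with both $\Re$ negative and $\Im$ positive, which (paired with $y\in\uhp$) shows $f+yg\notin\rup{2}$, hence $f\not\place g$.

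The main obstacle is the backward direction of (2): the naive unpacking only gives stability of $f+tg$ for $t>r$, and bridging the gap to small $t$ needs either the symmetry (4) or a Hurwitz-style limit argument. The other steps are routine substitutions and single-point verifications.
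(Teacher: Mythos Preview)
Your treatment is considerably more detailed than the paper's, which dispatches (1)--(5), (7), (9) in a single sentence (``the only ones that need any proof are the implication failures'') and then supplies counterexamples only for the three non-implications. For those, the paper uses $f=x^2$, $g=1$ to handle both (6) and (10) at once, and for (8) it recycles the pair $f=x(x+3)$, $g=(x+1)(x+2)$ whose failure to lie in $\polypos{2}$ was computed earlier in the text. Your alternative examples for (6), (8), (10) are different from the paper's but all verify correctly.

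The one genuine gap is exactly where you flag it: the backward direction of (2). Your suggested repair via ``openness and connectedness of $\stabled{}$'' is not an argument---$\stabled{}$ is not convex, and nothing you have cited prevents a pencil $f+sg$ from being stable on a ray $(r,\infty)$ while failing somewhere on $(0,r]$. In fact, without a standing hypothesis that $f\in\stabled{}$ the backward implication is simply false: with $f=x-1$, $g=1$, $r=2$ one has $(f+rg)+tg=x+1+t\in\stabled{1}$ for every $t>0$, so $f+rg\hposlace g$, yet $f+\tfrac12 g=x-\tfrac12\notin\stabled{1}$. The paper does not address this point either; the cleanest reading that makes (2) an honest equivalence is to quantify over all positive $r$, in which case the backward direction is immediate (given $s>0$, take $r=t=s/2$).
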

\begin{proof}
  The only ones that need any proof are the implication failures.
  \begin{description}
\item[$f\hposlace g \notimplies f\place g$] If $f=x^2$, $g=1$ and $t>0$
then $f+tg = x^2+t\in\stabled{}$, but it is easy to check that
$x^2+y\not\in\polypos{2}$.
  \item[$f\hposlace g\notimplies f\hlace g$] The above example also
    shows this, since $x^2+y\not\in\stabled{2}$.
\item[$f\pposlace g\notimplies f\place g$] If we let $f=x(x+3)$ and $g
  = (x+1)(x+2)$ then $f\pposlace g$. However, we have seen that
  $f+yg\not\in\polypos{2}$. 

  \end{description}
\end{proof}

\begin{fact}\ \label{fact:11}
  \begin{enumerate}
  \item If $\sum f_i(\xx)y^i\in\stabled{}$ then $f_k(\xx)\hposlace
    f_{k+2}(\xx) $ for $k=0,1,\dots$.
  \item If $f(\xx) + \cdots + yz\,g(\xx) + \cdots \in \stabled{}$ then
    $f\hposlace g$.
   \item If $f\hlace g$ then $f\hposlace xg$.
  \item If $f\hlace g$ and $f_1\hlace g_1$ then $ff_1 \hposlace gg_1$.
  \end{enumerate}
\end{fact}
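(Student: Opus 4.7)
I would tackle the four parts in order, since~(2) uses~(1) and~(4) uses~(2). For Part~(1), Fact~\ref{elem-2}(1) supplies the chain $f_k \hlace f_{k+1} \hlace f_{k+2}$, and Fact~\ref{elem-2}(3)(e) collapses it to $f_k + f_{k+2} \hlace f_{k+1}$. Thus $(f_k + f_{k+2}) + w f_{k+1} \in \stabledc{}$, and extracting the constant term in $w$ via Fact~\ref{lots elem}(7) gives $f_k + f_{k+2} \in \stabledc{} \cup \{0\}$; the positive coefficients inherited from $p \in \stabled{}$ upgrade this to $\stabled{} \cup \{0\}$. To obtain $f_k + r f_{k+2} \in \stabled{}$ for arbitrary $r > 0$, I first replace $y$ by $\sqrt{r}\, y$ (Fact~\ref{lots elem}(1b)), rerun the same argument on the rescaled coefficients $r^{i/2} f_i$, and divide through by the positive scalar $r^{k/2}$.

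For Part~(2), write $p(\xx, y, z) = \sum_j B_j(\xx, y)\, z^j$ with $B_j(\xx, y) = \sum_i c_{ij}(\xx)\, y^i$, so that $c_{00} = f$ and $c_{11} = g$. Fact~\ref{elem-2}(1) yields $B_0 \hlace B_1$; rescaling the fresh interlacing variable by an $\epsilon > 0$ and then identifying it with $y$ via Fact~\ref{lots elem}(1f) produces
\[ B_0(\xx, y) + \epsilon\, y\, B_1(\xx, y) \;=\; \sum_k \bigl( c_{k0} + \epsilon\, c_{k-1,1} \bigr) y^k, \]
which lies in $\stabled{d+1}$ (stable, with positive coefficients). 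Applying Part~(1) to this polynomial in $y$ compares its $y^0$- and $y^2$-coefficients, giving $f \hposlace c_{20} + \epsilon g$, equivalently $f + s\, c_{20} + t\, g \in \stabled{}$ for all $s, t > 0$ (set $s = r$ and $t = r\epsilon$). Sending $s \to 0^+$ and invoking the multivariate Hurwitz theorem on this family of stable polynomials in $\xx$ then forces $f + t g \in \stabled{}$ for every $t > 0$, which is exactly $f \hposlace g$.

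Parts~(3) and~(4) are immediate. For~(3), renaming $y$ as $x$ in $f + y g \in \stabledc{}$ and rescaling by $r > 0$ via Fact~\ref{lots elem}(1b) gives $f + r x g \in \stabledc{}$, with positive coefficients, hence in $\stabled{}$, proving $f \hposlace x g$. For~(4), Fact~\ref{lots elem}(3) shows
\[ (f + y g)(f_1 + y_1 g_1) \;=\; f f_1 + y\, g f_1 + y_1\, f g_1 + y y_1\, g g_1 \in \stabled{}; \]
since its constant term is $f f_1$ and its $y y_1$-coefficient is $g g_1$, Part~(2) immediately delivers $f f_1 \hposlace g g_1$. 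The main obstacle is Part~(2): one needs both the rescale-and-diagonalize trick to combine $B_0 \hlace B_1$ with the skip-index conclusion of Part~(1), and a Hurwitz-type limit to discard the spurious $c_{20}$ term.
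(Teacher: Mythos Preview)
Your argument is correct, but it diverges from the paper's in a way worth noting. The paper handles (1) and (2) by a single uniform trick: it multiplies the given stable polynomial by an auxiliary stable polynomial and then reads off a coefficient. For (1), since $y^2+t\in\stabled{}$ for $t>0$, the operator $\partial_y^2+t$ preserves $\stabled{}$ by Fact~\ref{f(x,D)}; applying it after $k$ $y$-derivatives and taking the constant term yields $t\,k!\,f_k+(k+2)!\,f_{k+2}\in\stabled{}$. For (2), one simply multiplies by $yz+t\in\stabled{}$ and extracts the $yz$-coefficient, which is exactly $f+tg$. This avoids both your interlacing-calculus detour through Fact~\ref{elem-2}(3)(e) in (1) and, more strikingly, the rescale--diagonalize--limit man\oe uvre in (2): no Hurwitz argument and no spurious $c_{20}$ term appear. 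Conversely, your treatment of (3) is more direct than the paper's: you rescale and identify $y$ with $x$ to get $f+rxg\in\stabled{}$ immediately, whereas the paper multiplies $(f+yg)(xy+1)$ and then invokes (1) on the $y$-expansion. For (4) the two are essentially the same idea---multiply the two interlacing witnesses together---but the paper uses the \emph{same} variable $y$ and reads off the $y^0,y^2$ coefficients via (1), while you use distinct variables $y,y_1$ and invoke (2). Your route is a bit longer overall, but it illustrates that (1)--(4) can all be reached purely from the interlacing calculus and closure under limits, without ever spotting the ``multiply by $yz+t$'' shortcut.
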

\begin{proof}
  Since $y^2+t\in\stabled{}$ for positive $t$, we know
  $\partial_y^2+t$ preserves $\stabled{}$. We first differentiate $k$
  times, yielding a polynomial in $\stabled{}$.  The constant term of
\begin{multline*}
(\partial_y^2 + t)\bigl[k!f_k(\xx) + (k+1)!f_{k+1}(\xx)y + (1/2)(k+2)! f_{k+2}(\xx) y^2 + \cdots\bigr]
\\ = 
\bigl[t k! f_k(\xx) + (k+2)! f_{k+2}(\xx)\bigr] + y(\cdots)
\end{multline*}
is in $\stabled{}$, which proves the first part. For the second part
we multiply by $yz+t$ where $t\ge0$ and consider the coefficient of $yz$.

The third part follows from first part
and the expansion
\[
(f+yg)(xy+1) = f + y(g+xf) + xg\,y^2
\]

Next, we compute
\[
(f+yg)(f_1+yg_1) = ff_1 + y(fg_1+f_1g) + y^2gg_1
\]
and the conclusion follows as above.
\end{proof}

\section{Properties of one variable}
\label{sec:prop-one-vari}

\begin{aside}
  In this section we assume $d=1$.  When we restrict ourselves to one
  variable there are some useful techniques for showing interlacing
  holds. We let $Q_i$ denote the $i$'th quadrant.
\end{aside}

\begin{fact}\label{one variable}\ 
  \begin{enumerate}
  \item \makebox[1.5cm]{$f\hposlace g$} iff $\frac{f}{g}\colon Q_1\longrightarrow \complexes\setminus(-\infty,0)$.
  \item \makebox[1.5cm]{$f\hlace g$} iff $\frac{f}{g}\colon Q_1\longrightarrow \rhpc$.
  \item  \makebox[1.5cm]{$f\place g$} iff $\frac{f}{g}\colon Q_1\longrightarrow Q_1$.
  \item  \makebox[1.5cm]{$f\pposlace g$} iff $\frac{f}{g}\colon Q_1\longrightarrow \rhp$
  \item \makebox[1.5cm]{$f\pposlace g$} implies $f\hlace g$.
  \end{enumerate}
\end{fact}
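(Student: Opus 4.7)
My strategy is uniform: each of the interlacing relations asserts that the linear-in-$y$ polynomial $f(x)+y\,g(x)$ (or the linear-in-$r$ polynomial $f(x)+r\,g(x)$ with $r>0$) lies in a prescribed class, and I translate the defining nonvanishing condition directly into an image restriction on the meromorphic function $\phi=f/g$.

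For fixed $x_0$ with $g(x_0)\ne0$, the equation $f(x_0)+y\,g(x_0)=0$ has the unique root $y_0=-\phi(x_0)$, so ``$f+yg$ nonvanishing on $X\times Y$'' becomes ``$\phi(x_0)\notin -Y$ for every $x_0\in X$''. Unpacking: for $(1)$ with $X=\rhp$ and $Y=(0,\infty)$ this forbids $(-\infty,0)$; for $(2)$ with $X=Y=\rhp$ it gives $\phi\in\rhpc$; and for $(3)$ with $\polypos{2}=\stabled{2}\cap\rup{2}$, the two simultaneous nonvanishing conditions on $\rhp\times\rhp$ and on $\uhp\times\uhp$ combine on $Q_1=\rhp\cap\uhp$ to $\phi(x_0)\in\overline{Q_1}$.

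To reduce the natural domain from $\rhp$ (or $\uhp$) down to $Q_1$ I invoke real-coefficient conjugate symmetry, $\overline{\phi(z)}=\phi(\bar z)$, so the image on $Q_4$ is the complex conjugate of that on $Q_1$. When the target is self-conjugate --- as in $(1)$, $(2)$, $(4)$ --- the $Q_1$-condition immediately promotes to the $\rhp$-condition, and the real axis is handled by continuity. For $(3)$, conjugation upgrades $\phi:Q_1\to\overline{Q_1}$ to $\phi:Q_4\to\overline{Q_4}$, settling the $\stabled{2}$ part, while the $\rup{2}$ part on $Q_2$ follows from the companion argument with the upper-half-plane in place of the right-half-plane. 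Passage from $\overline{Q_1}$ to the open $Q_1$ in the nonconstant case uses the open mapping theorem.

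For $(4)$, the cleanest route is via the common-interlacing description recalled in the aside before Fact~\ref{pos lace 1}: $f\pposlace g$ is equivalent to the existence of $h\in\polypos{1}$ with $h\place f$ and $h\place g$. Applying $(3)$ twice, both $h/f$ and $h/g$ carry $Q_1$ into $Q_1$, and since $1/Q_1=Q_4$, $f/h$ maps $Q_1$ into $Q_4$; as $Q_4\cdot Q_1\subset\rhp$, the product $\phi=(f/h)(h/g)$ sends $Q_1$ into $\rhp$. The main obstacle I expect is bookkeeping the open versus closed targets and the boundary cases, together with the converse existence step in the common-interlacing equivalence. Finally, $(5)$ is immediate: by $(4)$, $f\pposlace g$ gives $\phi(Q_1)\subset\rhp\subset\rhpc$, which by $(2)$ is exactly $f\hlace g$.
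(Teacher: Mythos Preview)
Your plan for parts (1), (2), (4), and (5) is essentially identical to the paper's: translate nonvanishing of $f+yg$ into an image condition on $\phi=f/g$, use conjugate symmetry $\overline{\phi(z)}=\phi(\bar z)$ to reduce the domain from $\rhp$ to $Q_1$, and for (4) factor through a common interlacing $h$ and multiply the two quotients $(f/h)(h/g)$.

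For (3) you take a genuinely different route. The paper exploits the explicit one-variable structure of $\place$: writing $f=\prod(x+r_i)$ and $g=\sum a_i\,f/(x+r_i)$ with $a_i\ge0$, it computes $(g/f)(\sigma)=\sum a_i/(\sigma+r_i)\in Q_4$ termwise for $\sigma\in Q_1$, whence $f/g\in Q_1$. You instead intersect the two image restrictions coming from $\stabled{2}$ and $\rup{2}$ on $Q_1=\rhp\cap\uhp$, landing in $\overline{Q_1}$, and then invoke the open mapping theorem. Your argument is more uniform with (1)--(2) and avoids the partial-fraction representation; the paper's route buys an explicit formula that is reused immediately afterward in Fact~\ref{positive interlace}.

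One genuine gap: in your converse direction for (3), the claimed ``companion argument with the upper-half-plane in place of the right-half-plane'' handling $Q_2$ does not exist as stated. Conjugate symmetry reflects across the \emph{real} axis, pairing $Q_1\leftrightarrow Q_4$ and $Q_2\leftrightarrow Q_3$; it tells you nothing about $\phi$ on $Q_2$ from its values on $Q_1$. So from $\phi:Q_1\to Q_1$ you correctly recover $f+yg\in\stabled{2}$ (via $Q_1\cup Q_4$), but the $\rup{2}$ half remains unproved. The paper's own proof, like yours, only argues the forward implications in (3) and (4), so this lacuna is shared rather than a defect peculiar to your approach.
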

\begin{proof}
  $f\hposlace g$ iff 
  $f + tg \in\stabled{1}$ for $t>0$ iff
  $-\frac{f}{g}(\sigma)\not\in (0,\infty)$ for $\sigma\in\rhp$ iff 
  $\frac{f}{g}\colon \rhp\longrightarrow
  \complexes\setminus(-\infty,0)$ iff 
  $\frac{f}{g}\colon  Q_1\longrightarrow \complexes\setminus(-\infty,0)$ since $f$ and $g$
  have real coefficients.

  Next, $f\hlace g$ iff $f+yg\in\stabled{2}$ iff
  $-\frac{f}{g}(\sigma)\not\in\rhp$ for $\chi\in\rhp$ iff
  $\frac{f}{g}\colon \rhp\longrightarrow\rhpc$ iff 
  $\frac{f}{g}\colon Q_1\longrightarrow\rhpc$.

  If $f\place g$ then we take $f$ to be monic, and we can write
  $f=\prod(x+r_i)$ and $g = \sum a_i f/(x+r_i)$ where $r_i$ and $a_i$
  are non-negative. If $\sigma\in Q_1$ then $(g/f)(\sigma) =
  \sum a_i/(\sigma+r_i) \in Q_4$, and so $(f/g)(\sigma)\in Q_1$. 

  If $f+tg\in\polypos{1}$ for all positive $t$ then $f$ and $g$ have a
  common interlacing\cite{fisk}, so there is an $h$ satisfying $h\longleftarrow
  f$ and $h\longleftarrow g$. If $\sigma\in Q_1$ then
  $(f/g)(\sigma) = \frac{f}{h}\frac{h}{g}(\sigma)$. By the first
  part $(h/f)(\sigma)\in Q_1$ and $(g/h)(\sigma)\in Q_4$, so
  $(f/g)(\sigma)\in\rhp$. 

  The last one follows from the  previous ones.

  \end{proof}

\begin{fact}\label{positive interlace} \ 
  \begin{enumerate}
  \item If $f\pposlace f_1$,\dots,$f\pposlace f_n$ then $f\pposlace f_1+\cdots+f_n$.
  \item If $f\place f_i$ and $g\place g_i$ for $1\le i\le n$ then $fg
    \hposlace \sum f_i g_i$.
  \item If $f\place g$ then $\smalltwodet{f}{g}{f'}{g'}$ is stable.
  \item If $f\place g$ then the Bezout polynomial 
    $B(x,y)=\frac{1}{x-y}\smalltwodet{f(x)}{f(y)}{g(x)}{g(y)}$ is
    stable.
  \end{enumerate}
\end{fact}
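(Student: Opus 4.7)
The plan is to handle (1) and (2) using the quadrant characterizations in Fact~\ref{one variable}, and to handle (3) and (4) using the partial fraction decomposition of $g/f$ obtained in the proof of that Fact.

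For (1), by Fact~\ref{one variable}(4) each hypothesis $f\pposlace f_i$ is equivalent to $f/f_i\colon Q_1\to\rhp$; taking reciprocals (which preserve $\rhp$) gives $f_i/f\colon Q_1\to\rhp$. Because $\rhp$ is closed under addition, $\sum f_i/f=(\sum f_i)/f$ maps $Q_1$ into $\rhp$, and taking reciprocals once more yields $f\pposlace \sum f_i$. For (2), Fact~\ref{one variable}(3) says $f\place f_i$ iff $f/f_i\colon Q_1\to Q_1$, equivalently $f_i/f\colon Q_1\to Q_4$ (since reciprocation sends $Q_1$ to $Q_4$); similarly $g_i/g\colon Q_1\to Q_4$. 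Any product of two elements of $Q_4$ has argument in $(-\pi,0)$, so each $(f_i g_i)/(fg)$ maps $Q_1$ into the open lower half plane. Adding, $(\sum f_i g_i)/(fg)$ maps $Q_1$ into the lower half plane, whose reciprocal is the upper half plane, a subset of $\complexes\setminus(-\infty,0)$. By Fact~\ref{one variable}(1), $fg\hposlace \sum f_i g_i$.

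For (3) and (4) I would normalize $f$ to be monic and (by a small perturbation, taking a limit at the end) assume the roots of $f$ are distinct. From the proof of Fact~\ref{one variable}(3) we then have $f=\prod(x+r_i)$ and $g/f=\sum a_i/(x+r_i)$ with $a_i,r_i\ge 0$. Routine manipulation yields
\[
W = fg'-gf' = f^2\,(g/f)' = -f^2\sum_i\frac{a_i}{(x+r_i)^2},
\qquad
B(x,y) = f(x)f(y)\sum_i\frac{a_i}{(x+r_i)(y+r_i)}.
\]
Since $f$ has no zeros in $\rhp$, stability of $W$ (resp.\ of $B$) reduces to showing these summations are nonzero for $\sigma\in\rhp$ (resp.\ for $(\sigma,\tau)\in\rhp\times\rhp$).

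For (3), with $\sigma=a+bi$, $a>0$, the identity $\Im[(\sigma+r_i)^{-2}] = -2b(a+r_i)/|\sigma+r_i|^4$ shows $\Im((g/f)'(\sigma)) = 2b\sum a_i(a+r_i)/|\sigma+r_i|^4$ has the sign of $b$; when $b=0$ the sum is a strictly negative real. Thus $(g/f)'(\sigma)\ne 0$. For (4), write $\sigma=a+bi$, $\tau=c+di$ and expand $(\sigma+r_i)(\tau+r_i)$: its real part is $(a+r_i)(c+r_i)-bd$ and imaginary part is $b(c+r_i)+d(a+r_i)$. If $bd\le 0$, every real part in the expansion is strictly positive, so the sum has strictly positive real part; if $bd>0$ then $b,d$ share a sign and each $b(c+r_i)+d(a+r_i)$ shares that sign, forcing the sum to have nonzero imaginary part. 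In either case the summation is nonzero.

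The main obstacle is the case split in (4): no single real-part or imaginary-part argument covers every configuration of $b$ and $d$, so one must branch on the sign of $bd$. A minor technicality is the density argument reducing to simple roots of $f$; both conclusions are closed conditions on polynomials, so limits are harmless.
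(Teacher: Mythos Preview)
Your argument is correct and follows essentially the same route as the paper: the quadrant maps of Fact~\ref{one variable} for (1)--(2), and the partial-fraction expansion $g/f=\sum a_i/(x+r_i)$ for (3)--(4). The only difference is cosmetic: in (3) and (4) the paper phrases the conclusion as the interlacings $f^2\hlace f'g-fg'$ and $f(x)f(y)\hposlace B(x,y)$ (from which stability follows), whereas you verify nonvanishing on $\rhp$ directly---your case split on the sign of $bd$ in (4) supplies a detail the paper leaves to the reader.
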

\begin{proof}
  The first one follows from Fact~\ref{one variable}(4) since
  $\frac{f_1+\cdots+f_n}{f} = \frac{f_1}{f}+\cdots+\frac{f_n}{f}$. The
  second follows from 
\[ \frac{1}{fg}\sum f_ig_i = \sum \frac{f_i}{f}\,\frac{g_i}{g}
\in\uhp\]
by Fact~\ref{one variable}(3).  For the
  next one, write $f=\prod(x+r_i)$ and $g = \sum a_i \frac{f}{x+r_i}$
  where the $a_i$ are non-negative then
  \[ (f'g-fg')(\sigma) = f^2\,\sum \frac{a_i}{(\sigma+r_i )^2} \] and
   this is in the lower half plane, so $f^2 \hlace f'g-g'h$. In
   particular, $f'g-g'h$ is stable.

   Finally, we show that $f(x)f(y)\hposlace B(x,y)$ by using the identity
\[ B(x,y) = \sum a_i \frac{f(x)}{x-r_i}\frac{f(y)}{y-r_i}
\]
\end{proof}

\begin{example}
  Assume that $f_0,f_1,f_2,\dots$ is an orthogonal polynomial
  sequence.  The Christoffel-Darboux formula states that
\[
 f_0(x)^2 + f_1(x)^2+\cdots+f_n(x)^2 =
  \frac{k_n}{k_{n+1}}
\begin{vmatrix} f_{n}(x) & f_{n+1}(x) \\ f_{n}'(x) &
  f_{n+1}'(x)\end{vmatrix}\\
\]
for certain constants $k_n$.  It follows from  Fact~\ref{positive interlace} that

\[
 f_0(x)^2 + f_1(x)^2+\cdots+f_n(x)^2 \quad \text{is stable.}
\]

\end{example}

We can generalize the third part of Fact~\ref{positive interlace} in
several ways. The proofs are similar to Fact~\ref{positive interlace}
-- write the desired determinant in terms of the parameters and simple
geometry implies it satisfies the conditions for positive interlacing.

\begin{fact}\ 
  \begin{enumerate}
  \item If $\sum f_i(x)y^i \in\polypos{2}$ and $0<\alpha<2$ then $(f_0)^2 \hposlace
    \smalltwodet{f_1}{\alpha f_0}{f_2}{f_1}$.
  \item If $\sum f_i(x)y^i \in\polypos{2}$ then
    $\smalltwodet{f_k}{f_{k+1}}{f_{k+1}}{f_{k+2}}$ is stable. 
  \item Suppose that $D_1,D_2,D_3$ are positive definite matrices,
    where $D_1$ is diagonal, and $D_2,D_3$ have all positive entries. If
\[ \bigl| Id + x D_1 + yD_2 + zD_3\bigr| = f(x) + y\,g(x)+ z\,h(x) + yz\,k(x) +
\cdots \]
then $\smalltwodet{f}{g}{h}{k}$ is stable.
  \end{enumerate}
  
\end{fact}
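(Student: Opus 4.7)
Following the hint, I parameterise each polynomial by its roots (or by the matrix entries in part~(3)), write the target two-by-two determinant explicitly in those parameters, and then verify positive interlacing or stability by a direct geometric image argument via Fact~\ref{one variable}.

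For parts~(1) and~(2) the common setup is as follows: fix $\sigma\in Q_1$ and factor $F(\sigma,y)=f_n(\sigma)\prod_{j=1}^n(y+\mu_j)$. Because $F\in\stabled{2}$ the roots of $F(\sigma,y)$ lie in the closed left half-plane, and because $F\in\rup{2}$ they lie in the closed lower half-plane; hence each $\mu_j$ lies in the closed first quadrant and $\nu_j:=1/\mu_j$ lies in the closed fourth quadrant. The standard symmetric-function identity then gives $f_k(\sigma)/f_0(\sigma)=e_k(\nu_1,\ldots,\nu_n)$.

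For part~(1), Fact~\ref{one variable}(1) reduces the claim to showing
\[
\frac{f_1^2-\alpha f_0 f_2}{f_0^2}\;=\;e_1(\nu)^2-\alpha\,e_2(\nu)\;=\;\sum_j\nu_j^2+(2-\alpha)\sum_{j<k}\nu_j\nu_k
\]
is never a negative real for $\sigma\in Q_1$. Writing $\nu_j=a_j+ib_j$ with $a_j\geq0$, $b_j\leq0$ and using $(2-\alpha)>0$, every summand contributes a non-positive real to the imaginary part, so $\Im(\,\cdot\,)\leq0$. Equality forces $a_jb_k=0$ for all $j,k$, leaving two degenerate cases: either all $\nu_j$ are real non-negative (the expression is then a strictly positive real), or all $\nu_j$ are purely imaginary (the expression is then a negative real, but this case is excluded because $e_2(\nu)=f_2/f_0$ would be a strictly negative real, contradicting $f_0\hposlace f_2$ from Fact~\ref{fact:11}(1)). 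Part~(2) follows the same template applied to $f_kf_{k+2}-f_{k+1}^2=f_0^2\bigl(e_k(\nu)e_{k+2}(\nu)-e_{k+1}(\nu)^2\bigr)$, using the strict Newton inequality $e_{k+1}^2>e_ke_{k+2}$ for the real-positive case and $f_k\hposlace f_{k+2}$ (again from Fact~\ref{fact:11}(1)) for the purely-imaginary case; conjugate symmetry of real coefficients then extends non-vanishing from $Q_1$ to all of $\rhp$. The main obstacle in (1)--(2) is precisely the careful handling of these two boundary degenerations.

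For part~(3), the Jacobi formula $\partial_t|M|=|M|\,\mathrm{tr}(M^{-1}\partial_t M)$ applied twice gives $f(x)=\prod_i(1+xd_i)$, $g=f\sum_i(D_2)_{ii}/(1+xd_i)$, $h=f\sum_i(D_3)_{ii}/(1+xd_i)$, and an analogous formula for $k$ involving $\mathrm{tr}(M^{-1}D_2M^{-1}D_3)$. The mixed-partial cross terms telescope to give
\[
fk-gh\;=\;-f^2\sum_{i,j}\frac{(D_2)_{ij}(D_3)_{ij}}{(1+xd_i)(1+xd_j)}\;=\;-f^2\,v^{T}(D_2\circ D_3)v
\]
where $v_i:=1/(1+xd_i)$. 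By Schur's product theorem $A:=D_2\circ D_3$ is positive definite, and the hypothesis gives $A$ strictly positive entries. For $x=\alpha+i\beta\in\rhp$ we can write $v=a+ib$ with $a_i>0$ and $b_i=-\beta d_i/|1+xd_i|^2$ all of the same sign; then $v^{T}Av=(a^{T}Aa-b^{T}Ab)+2i\,a^{T}Ab$, and the imaginary part $a^{T}Ab$ is a signed sum of strictly positive terms, hence nonzero whenever $\beta\neq0$. When $\beta=0$, $v^{T}Av=a^{T}Aa>0$. Either way $v^{T}Av\neq0$, so $fk-gh$ is stable. The sign argument in the last step, which makes essential use of the positive-entries hypothesis, is the main obstacle of part~(3).
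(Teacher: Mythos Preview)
The paper gives no detailed proof of this fact, only the one-line hint that one should ``write the desired determinant in terms of the parameters and simple geometry implies it satisfies the conditions for positive interlacing.''  Your argument follows exactly this plan, and for parts~(1) and~(3) the parametrisation and quadrant-tracking you give are correct and complete.  In part~(1) the decomposition $e_1^2-\alpha e_2=\sum\nu_j^2+(2-\alpha)\sum_{j<k}\nu_j\nu_k$ really is a sum of $\overline{Q_4}\cdot\overline{Q_4}$ terms, so the imaginary part is $\le0$ and your two boundary cases are exhaustive; the appeal to Fact~\ref{fact:11}(1) to kill the purely-imaginary case is legitimate.  In part~(3) the Jacobi computation and the Schur-product sign argument are correct, and this is precisely where the positive-entries hypothesis earns its keep.

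Part~(2), however, has a genuine gap.  You assert that the ``same template'' works for $e_k(\nu)e_{k+2}(\nu)-e_{k+1}(\nu)^2$, i.e.\ that its imaginary part has one sign for $\nu_j\in\overline{Q_4}$ and that vanishing of the imaginary part forces all $\nu_j$ real or all purely imaginary.  This is false for $k\ge1$: with $n=3$, $k=1$ and $\nu_1=-\imag$, $\nu_2=\nu_3=1$ (realised, up to a limit, by $F(x,y)=(y+1)^2(y+x)\in\polypos{2}$ at $\sigma=\epsilon+\imag$) one gets $e_1e_3-e_2^2=2+2\imag$, with \emph{positive} imaginary part.  So the expression is not a signed sum of lower-half-plane terms, your dichotomy is not exhaustive, and the generic mixed case is unhandled.

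A clean repair is to reduce part~(2) to part~(1): apply part~(1) to $\partial_y^{\,k}F\in\polypos{2}$, whose first three $y$-coefficients are $k!\,f_k$, $(k{+}1)!\,f_{k+1}$, $\tfrac{(k{+}2)!}{2}\,f_{k+2}$, and choose $\alpha=\dfrac{2(k+1)}{k+2}\in[1,2)$.  The resulting determinant is a positive multiple of $f_{k+1}^2-f_kf_{k+2}$, so $f_k^2\hposlace f_{k+1}^2-f_kf_{k+2}$, and in particular $f_kf_{k+2}-f_{k+1}^2$ is stable.
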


\section{Questions}
\label{sec:questions}

\begin{question}
  If $f\pposlace g$ or $f\hposlace g$ then do $f$ and $g$ have a
  common interlacing? This is true for $f\pposlace g$ where
  $f,g\in\polypos{1}$. 
\end{question}

\begin{question}
  If $\sum f_{ij}(x)y^iz^j\in\polypos{3}$ and $r$ is a positive integer then is the following
  determinant stable?
\[ 
\begin{vmatrix}
  f_{00} & f_{10} & \cdots & f_{r0} \\
  f_{01} & f_{11} & \cdots & f_{r1} \\
  \vdots & \vdots&        & \vdots \\
  f_{0r} & f_{1r} & \cdots & f_{rr}
\end{vmatrix}
\]
\end{question}


\begin{bibdiv}
  \begin{biblist}

\bib{bb2}{article}{
  title=   {Polya-Schur master theorems for circular domains and their boundaries},
  author=  { Julius Borcea},
  author=  { Petter Br\"and\'en},
  author=  { Boris Shapiro},
  eprint=  {arXiv:math/0607416}}

    \bib{bbs}{article}{
    title = {Classification of hyperbolicity and stability preservers:
        the multivariate Weyl algebra case},
    author = {Julius Borcea},
    author = {Petter Br\"and\'en},
    author = {Boris Shapiro},
    eprint = {arXiv:math.CA/0606360}
}

\bib{branden-hpp}{article}{
    title = {{Polynomials with the half-plane property and matroid theory}},
    author = {P. Br\"and\'en},
    eprint = {arXiv:math.CO/0605678}}

  \bib{bbs-johnson}{article}{ 
    title= {Applications of stable
      polynomials to mixed determinants: Johnson's conjectures,
      unimodality and symmetrized Fischer products}, 
    author = {Julius Borcea},
    author = {Petter Br\"and\'en},
    author = {Boris Shapiro}, 
    eprint =  {math.SP/0607755}}

\bib{bbl}{article}{
  author =	 {Julius Borcea},
  author = {Petter Br\"and\'en},
  author = {Thomas M. Liggett},
  title =	 {Negative dependence and the geometry of polynomials},
  eprint = {arXiv:0707.2340}
}

\bib{fisk}{article}{
  title = {Polynomials, roots, and interlacing},
  author = {Steve Fisk},
  eprint = {arXiv:math.CA/0612833 }}

\bib{fisk-upper}{article}{
  title={An introduction to upper half plane polynomials},
  author = {Steve Fisk},
  note={arXiv:0711.4043}}  

\bib{garloff-wagner}{article}{
   author = {J{\"u}rgen Garloff and David G. Wagner},	
   title = {Hadamard products of stable polynomials are stable},
   journal = {Journal of mathematical analysis and applications},
   pages = {797-809},
   year = {1996} ,
   volume = {202}
 }

\bib{johnson}{article}{
  author = 	 {Johnson, C.R,},
  title = 	 {A characteristic polynomial for matrix pairs},
  journal = 	 {Linear and multilinear algebra},
  year = 	 {1989},
  volume =	 {25},
  pages =	 {289-290}
}

\bib{lax}{article}{
    title = {{The Lax conjecture is true}},
    author = {Adrian S. Lewis and Pablo A. Parrilo and Motakuri V. Ramana},
    eprint = {arXiv:math.OC/0304104}}

\bib{rahman}{book}{
  author =	 {Q. I. Rahman and G. Schmeisser},
  title = 	 {Analytic Theory of polynomials},
  publisher = 	 {Oxford Science Publications},
  year = 	 {2002},
  number =	 {26},
  series =	 {Mathematical Society Monographs}
}

\bib{kharitonov}{article}{
     TITLE = {Advances in mathematical systems theory},
    SERIES = {Systems \& Control: Foundations \& Applications},
    EDITOR = {Colonius, Fritz and Helmke, Uwe and Pr{\"a}tzel-Wolters,
              Dieter and Wirth, Fabian},
    author = {Kharitonov, Vladimir},
      NOTE = {A volume in honor of Diederich Hinrichsen},
 PUBLISHER = {Birkh\"auser Boston Inc.},
   ADDRESS = {Boston, MA},
      YEAR = {2001},
     PAGES = {19-31},
      ISBN = {0-8176-4162-9},
}

\bib{wagner}{article}{
    AUTHOR = {Choe, Young-Bin},
    author = { Oxley, James G.},
    author = {Sokal, Alan D.},
    author=  {Wagner, David G.},
     TITLE = {Homogeneous multivariate polynomials with the half-plane
              property},
      NOTE = {Special issue on the Tutte polynomial},
   JOURNAL = {Adv. in Appl. Math.},
    VOLUME = {32},
      YEAR = {2004},
    NUMBER = {1-2},
     PAGES = {88--187},
      ISSN = {0196-8858}
}

  \end{biblist}
\end{bibdiv}

\end{document}